\newtheorem{teo}{Theorem}[section]
\newtheorem{pro}[teo]{Proposition}
\newtheorem{lemma}[teo]{Lemma}
\newtheorem{coro}[teo]{Corollary}
\newtheorem{con}[teo]{Conjecture}
\theoremstyle{definition}
\newtheorem{deff}[teo]{Definition}
\newtheorem{rem}[teo]{Remark}
\newtheorem{esem}[teo]{Example}
\newcommand{\rest}{\mathbin\restriction}
\newcommand{\N}{\mathbb N}
\newcommand{\Z}{\mathbb Z}
\newcommand{\Q}{\mathbb Q}
\newcommand{\R}{\mathbb R}
\newcommand{\Pf}{\mathcal P_{fin}}
\newcommand{\F}{\mathcal F}
\newcommand{\End}{\mathrm{End}}
\newcommand{\id}{\mathrm{id}}
\numberwithin{equation}{section}
\renewenvironment{thebibliography}[1]{
  \begin{oldthebibliography}{#1}
    \setlength{\itemsep}{0em}
    \setlength{\parskip}{0em}
}
{
  \end{oldthebibliography}
}
\title{Additivity of the algebraic entropy \\ for locally finite groups with permutable finite subgroups}
\author{Anna Giordano Bruno \and Flavio Salizzoni}
\date{\small{\texttt{anna.giordanobruno@uniud.it} \quad \texttt{flavio.salizzoni@gmail.com}\\ Universit\`a degli Studi di Udine\\ Dipartimento di Scienze Matematiche, Informatiche e Fisiche \\ Via delle Scienze 206, 33100 Udine (Italy)}}
\begin{document}

\maketitle

\begin{abstract}
The additivity with respect to exact sequences is notoriously a fundamental property of the algebraic entropy of group endomorphisms. It was proved for abelian groups by deeply exploiting their structure. On the other hand, a solvable counterexample was recently found, showing that it does not hold in general. Nevertheless, we give a rather short proof of the additivity of the algebraic entropy for locally finite groups that are either quasihamiltonian or $FC$-groups.
\end{abstract}

\section{Introduction}

In analogy with the measure entropy by Kolmogorov \cite{Kol} and Sinai \cite{Sinai}, Adler, Konheim and McAndrew \cite{AKM} investigated the topological entropy for continuous selfmaps of compact spaces. They also introduced the algebraic entropy for endomorphisms of discrete abelian groups, 
%
that was gradually developed by Weiss \cite{W} and Peters  \cite{P1}. More recently, it was thoroughly described for torsion abelian groups by Dikranjan, Goldsmith, Salce and Zanardo \cite{DGSZ}, and for abelian groups in \cite{DGB}, where a connection was pointed out with Lehmer Problem from number theory. This is based on the so-called Algebraic Yuzvinski Formula: the algebraic entropy of an endomorphism $\phi$ of $\Q^k$ coincides with the Mahler measure of the characteristic polynomial of $\phi$ over $\Z$ (see \cite{GBV,GBV2} -- see \cite{LW,Y1} for the Yuzvinski Formula concerning the topological entropy).

In \cite{DGBpak} the definition of algebraic entropy was extended to all group endomorphisms and all basic properties were verified. Moreover, a close connection was pointed out between the algebraic entropy and the classical growth theory of finitely generated groups due to Milnor \cite{Mil}, Gromov \cite{Gro}, Grigorchuk \cite{Gri}, etc. (see also \cite{DGB3}). In \cite{DGB,GBSp,GBSp1} various results were given on the growth of group endomorphisms; at the moment, the main result in this context is the counterpart of Milnor-Wolf Theorem for endomorphisms of elementary amenable groups.

In another direction the algebraic entropy was recently extended to left actions of cancellative right amenable semigroups on abelian groups in \cite{DFG} (see also the work of Virili \cite{V3} with applications to Kaplansky's Stable Finiteness Conjecture and Zero-Divisors Conjecture).
Moreover, we refer the interested reader to \cite{GBST,P2,V1,XST} for results on the algebraic entropy for continuous endomorphisms of locally compact groups, and to \cite{DGB1,DGB2,GB,V2,W} for the connection of the algebraic entropy with the topological entropy via Pontryagin duality.

\medskip
Now we give the definition of algebraic entropy in the setting we are interested in, that is, for group endomorphisms.
Let $G$ be a group and $\phi\in\End(G)$. For every non-empty subset $X$ of $G$ let $T_0(\phi,X)=\{1\}$ and, for all $n\in\mathbb{N}_+$, let
\begin{equation}
T_n(\phi,X)=X\phi(X)\phi^2(X)\cdots\phi^{n-1}(X)\,.
\end{equation}
Denote by $\mathcal P(G)$ the power set of $G$ and let $$\Pf(G)=\{X\subseteq G\mid X\ \text{is finite and non-empty}\}.$$ 
If $X\in\Pf(G)$, then $T_n(\phi,X)\in\Pf(G)$ for every $n\in\N$, and if $1\in X$, we have an increasing chain, that is, $T_n(\phi,X)\subseteq T_{n+1}(\phi,X)$ for every $n\in\N$.


Denote by $$\ell:\mathcal P(G)\to \R_{\geq0}\cup\{\infty\}$$ the function defined by $\ell(X)=\log|X|$ for every $X\in\Pf(G)$ and $\ell(X)=\infty$ for every infinite $X\in\mathcal P(G)$.

If $X\in\Pf(G)$, then the limit
\begin{equation}\label{limite}
H(\phi,X)=\lim_{n\rightarrow\infty}\frac{\ell(T_n(\phi,X))}{n}
\end{equation}
exists and $H(\phi,X)=\inf_{n\in\N}\frac{\ell(T_n(\phi,X))}{n}$ (see \cite[Lemma 5.1.1]{DGBpak}); in particular, $H(\phi,X)$ is finite and it is called the \emph{algebraic entropy of $\phi$ with respect to $X$}.
The \emph{algebraic entropy} of $\phi$ is
$$h(\phi)=\sup\{H(\phi,X)\mid X\in\Pf(G)\}\,.$$

Consider the category of algebraic dynamical systems, that has as objects the pairs $(G,\phi)$ with $G$ a group and $\phi\in\End(G)$, and as morphisms between objects $(G,\phi)$ and $(H,\psi)$ the group homomorphisms $\xi:G\to H$ such that $\psi\xi=\xi\phi$. The algebraic entropy is an invariant of this category, meaning that isomorphic algebraic dynamical systems have the same algebraic entropy: if $G$ and $H$ are groups, $\phi\in\End(G)$ and $\psi\in\End(H)$, and there exists an isomorphism $\xi:G\to H$ such that $\psi=\xi\phi\xi^{-1}$, then $h(\phi)=h(\psi)$ (see \cite{DGBpak}).

\smallskip
A fundamental property of the algebraic entropy, as well as of other entropy functions in mathematics, is the so-called Addition Theorem:

\begin{deff}
We say that \emph{the Addition Theorem holds} for a group $G$, $\phi\in\End(G)$ and $H$ a $\phi$-invariant normal subgroup of $G$, and write that \emph{$AT(G,\phi,H)$ holds}, if 
$$h(\phi)=h(\phi\rest_H)+h(\bar \phi_{G/H})\,.$$
where $\bar\phi_{G/H}\in\End(G/H)$ is induced by $\phi$.

We say that \emph{the Addition Theorem holds for $G$}, and write that \emph{$AT(G)$ holds}, if $AT(G,\phi,H)$ holds for every $\phi\in\End(G)$ and every $\phi$-invariant normal subgroup $H$ of $G$.
\end{deff}
 
In \cite{DGSZ} the Addition Theorem was proved for torsion abelian groups, deeply exploiting the structure of such groups. This result was extended to every abelian group in \cite{DGB0}, again making a very heavy use of the properties of abelian groups in order to reduce to the case of finite dimensional rational vector spaces, where the Algebraic Yuzvinski Formula applies.
Recently, the Addition Theorem from \cite{DGSZ} was extended also to left actions of cancellative right amenable monoids on torsion abelian groups in \cite{DFG}.
Since an endomorphism of an abelian group $G$ induces on $G$ the structure of a $\Z[x]$-module, and viceversa, one can consider the algebraic entropy as an invariant of the category of $\Z[x]$-modules. So, the importance of the Addition Theorem comes also from the fact that, together with the property of being continuous with respect to direct limits, it implies that the algebraic entropy is a length function of the category of $\Z[x]$-modules in the sense of Northcott and Reufel \cite{NR} and V\'amos \cite{Vamos} (see~\cite{DGB0} for the details on this connection, and see also \cite{SVV}).

On the other hand, an example was given in \cite{GBSp} of a solvable group $G$ for which $AT(G)$ does not hold true. For example, take $G$ to be the Lamplighter group $G=\Z_2^{(\Z)}\rtimes \Z$, with $\phi=\id_G$ and $H=\Z_2^{(\Z)}$; then $h(\id_G)=\infty$, while $h(\id_H)=0=h(\id_{G/H})$. Indeed, the identity map of an abelian group has always zero algebraic entropy, while a (finitely generated) group $G$ has exponential growth precisely when $h(\id_G)=\infty$ (see~\cite{DGBpak,DGB0,GBSp} for more details).

Nevertheless, it would be relevant to understand for which non-abelian groups $G$ we have that $AT(G)$ holds. For example, the conjecture that $AT(G)$ holds for every nilpotent group $G$ is still open.

In this paper we are interested in the following conjecture from \cite{GBST,GBSp}, and we prove it in a particular case (see Theorem~\ref{atq}) covering recent results from \cite{GBST,XST} as well as the Addition Theorem for torsion abelian groups from \cite{DGSZ}.

\begin{con}
If $G$ is a locally finite group, then $AT(G)$ holds.
\end{con}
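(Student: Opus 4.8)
The plan is to split the Addition Theorem into its two component inequalities and to exploit the fact that the algebraic entropy is a \emph{local} invariant. First I would reduce to the case in which $G$ is countable: for a fixed $X\in\Pf(G)$ with $1\in X$, the subgroup generated by $\bigcup_{n}T_n(\phi,X)$ is a countable, locally finite, $\phi$-invariant subgroup (indeed $\phi(T_n(\phi,X))\subseteq T_{n+1}(\phi,X)$), so $h(\phi)$, $h(\phi\rest_H)$ and $h(\bar\phi_{G/H})$ are all computed inside countably many such pieces; a countable locally finite group is moreover a directed union of an increasing chain of finite subgroups. Monotonicity of $h$ under $\phi$-invariant subgroups and under quotients already yields $h(\phi)\ge h(\phi\rest_H)$ and $h(\phi)\ge h(\bar\phi_{G/H})$, so it remains to prove the superadditivity $h(\phi)\ge h(\phi\rest_H)+h(\bar\phi_{G/H})$ and the subadditivity $h(\phi)\le h(\phi\rest_H)+h(\bar\phi_{G/H})$. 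Note that the Lamplighter example violates precisely subadditivity and is \emph{not} locally finite; this isolates subadditivity as the crux of the conjecture.

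For superadditivity I would fix finite sets $X\subseteq H$ and $Y\subseteq G$ with $1\in X$ and $1\in Y$, realizing entropy close to $h(\phi\rest_H)$ and $h(\bar\phi_{G/H})$ respectively, and test $\phi$ on $Z=YX\in\Pf(G)$. Since $\pi\colon G\to G/H$ maps $T_n(\phi,Z)$ onto $T_n(\bar\phi_{G/H},\pi(Y))$, while choosing $\phi^i(x_i)\in\phi^i(X)$ from each factor shows $T_n(\phi\rest_H,X)\subseteq T_n(\phi,Z)$ inside the trivial coset, the goal is to arrange that each of the $|T_n(\bar\phi_{G/H},\pi(Y))|$ cosets met by the trajectory carries a full translate of $T_n(\phi\rest_H,X)$, so that the fibre cardinalities multiply and $\ell(T_n(\phi,Z))\ge\ell(T_n(\phi\rest_H,X))+\ell(T_n(\bar\phi_{G/H},\pi(Y)))$. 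Dividing by $n$, letting $n\to\infty$ and taking suprema gives the inequality. I expect this direction to go through for arbitrary locally finite groups, since it uses only the coset structure and the normality of $H$, the one point requiring care being the verification that each coset does carry such a translate despite the non-abelian multiplication.

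For subadditivity I would start from an arbitrary $X\in\Pf(G)$ with $1\in X$ and decompose
\[
\ell(T_n(\phi,X))=\log\Big(\sum_{\bar g\in T_n(\bar\phi_{G/H},\pi(X))}\big|T_n(\phi,X)\cap gH\big|\Big)\le \ell(T_n(\bar\phi_{G/H},\pi(X)))+\max_{\bar g}\log\big|T_n(\phi,X)\cap gH\big|.
\]
The entire problem is then to bound the fibre $T_n(\phi,X)\cap gH$ by $|T_n(\phi\rest_H,X')|$ for a \emph{single} finite set $X'\subseteq H$ depending only on $X$ and not on $n$: choosing a representative $r$ of $gH$ inside the trajectory and left-translating, one wants $r^{-1}\big(T_n(\phi,X)\cap gH\big)\subseteq T_n(\phi\rest_H,X')$, which after division by $n$ and passage to the limit yields $h(\phi)\le h(\bar\phi_{G/H})+h(\phi\rest_H)$.

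The main obstacle is exactly this fibre bound. Left-translating a fibre into $H$ turns the defining products $\phi^i(x)$ into conjugates $r^{-1}\phi^i(x)r$ together with cross terms, and in a general non-abelian group these need not lie in the trajectory of any fixed finite subset of $H$: the candidate set $X'$ threatens to grow with $n$, wrecking the estimate. In the quasihamiltonian and $FC$ cases the permutability of finite subgroups, respectively the finiteness of conjugacy classes, is precisely what confines these conjugates and cross terms to a fixed finite subset of $H$, so that $X'$ can be chosen independently of $n$. A proof of the full conjecture therefore requires a substitute valid for every locally finite group — a uniform, $n$-independent comparison of the fibre over an arbitrary coset with a trajectory inside $H$ — and producing such a device (or, failing that, a locally finite counterexample to subadditivity) is the step I expect to be genuinely hard and is what keeps the conjecture open.
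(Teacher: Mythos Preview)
The statement you were given is a \emph{conjecture}, not a theorem: the paper explicitly presents it as open and proves only the special case where $G$ is finitely quasihamiltonian (Theorem~\ref{atq}). You correctly recognise this and end by saying the full statement remains open, so your write-up is best read as a strategy rather than a proof; as such it is broadly sound and accurately isolates subadditivity as the crux.

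Two remarks on the comparison with what the paper actually does in the special case. First, you treat superadditivity as the easy direction and expect it to hold for all locally finite groups, flagging only ``one point requiring care''. In fact the paper's proof of $h(\phi)\ge h(\phi\rest_H)+h(\bar\phi_{G/H})$ (Proposition~\ref{first}) also uses the finitely quasihamiltonian hypothesis in an essential way: it chooses $B\in\mathcal F_C(G)$ so that each $T_n(\phi,B)$ is a \emph{subgroup} (Lemma~\ref{comm}), and then the exact sequence $T_n(\phi\rest_H,A)\hookrightarrow T_n(\phi,B)\twoheadrightarrow T_n(\bar\phi_{G/H},C)$ forces every fibre to have the same size. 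Your alternative plan, arranging that every coset met by $T_n(\phi,Z)$ carries a translate of $T_n(\phi\rest_H,X)$, is precisely what the subgroup property delivers for free; without it, only the fibre over the identity is obviously that large, and the general translate claim is not clear. So superadditivity for arbitrary locally finite groups is not the throwaway step your sketch suggests.

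Second, for subadditivity your plan is to find a single finite $X'\subseteq H$ with $r^{-1}(T_n(\phi,X)\cap gH)\subseteq T_n(\phi\rest_H,X')$ uniformly in $n$. The paper takes a genuinely different route: it never proves such a pointwise fibre inclusion. Instead it introduces the index function $\ell(X,B)=\log[XB:B]$, chooses $\bar H_0\in\mathcal F_C(H)$ once, and shows (Lemma~\ref{6.8}) that the sequence $\ell(T_{2^n}(\phi,D),T_{2^n}(\phi,\bar H_0))/2^n$ is \emph{decreasing}; a single value of this sequence, at a fixed $M$, is then matched to $\ell(T,H)/2^M$ and bounded by $h(\bar\phi_{G/H})+\varepsilon$. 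This monotonicity-along-dyadics trick replaces your uniform containment by an asymptotic comparison of indices, and it is exactly here that permutability is used (to make $T_{2^n}(\phi,\bar H_0)$ a subgroup so that $\ell(-,-)$ is defined). Your diagnosis of why the general case is hard is correct, but the mechanism the paper uses in the tractable case is not the one you outline.
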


First of all note that in the class of locally finite groups the computation of the algebraic entropy becomes more comfortable. 
Indeed, denoting by $$\mathcal{F}(G)=\{F\leq G\mid F\ \text{is finite}\}\subseteq \Pf(G)$$ the family of all finite subgroups of a group $G$, we have that $G$ is locally finite precisely when $\F(G)$ is cofinal in $\Pf(G)$ with respect to the order given by the inclusion.
So, for a locally finite group $G$ and $\phi\in\End(G)$, we have that (see Remark~\ref{rem1})
%

$$h(\phi)=\sup\{H(\phi,F)\mid F\in\mathcal{F}(G)\}.$$
This means that we can use only the finite subgroups of $G$ to compute the algebraic entropy of $\phi$. 

In addition we will need also that each $T_n(\phi,F)$ is a subgroup of $G$ for every $F$ in a cofinal subfamily of $\F(G)$. For this reason we will introduce the groups in Definition~\ref{fqhdef}.

\smallskip
We start recalling a notion due to Ore~\cite{Ore}, that is, a subgroup $H$ of a group $G$ is \emph{permutable} if $HK=KH$ for every subgroup $K$ of $G$; in other words, $HK$ is a subgroup of $G$ for every subgroup $K$ of $G$.
Moreover, a group $G$ is \emph{quasihamiltonian} if all its subgroups are permutable; the quasihamiltonian groups are called also \emph{Iwasawa groups} as the structure of quasihamiltonian groups was described by Iwasawa \cite{Iwa} (some gaps in the proof were filled by Napolitani~\cite{Napo}).  

Since for the computation of the algebraic entropy in locally finite groups we are concerned with finite subgroups, we need only the milder condition in Definition~\ref{fqhdef}. For a group $G$, let $$\mathcal{F}_C(G)=\{F\in\mathcal{F}(G)\mid FE=EF\ \text{for all}\ E\in\mathcal{F}(G)\}\subseteq \F(G)\,.$$


\begin{deff}\label{fqhdef}
A group $G$ is \emph{finitely quasihamiltonian} if $\F_C(G)$ 
is cofinal in $\mathcal{F}(G)$.
\end{deff}

Clearly, if $G$ is a quasihamiltonian group, then $\F_C(G)=\F(G)$; hence, 
\begin{center}
every quasihamiltonian group is finitely quasihamiltonian.
\end{center}
The converse is not true (e.g, $\mathbb Q/\Z\times F_2$, where $F_2$ is the non-abelian free group with two generators, is finitely quasihamiltonian but not quasihamiltonian).

Clearly, every torsion finitely quasihamiltonian group is locally finite.
Another class of groups with this property is that of \emph{$FC$-groups}, that is, groups in which each element has only finitely many conjugates. Indeed, by \cite[Theorem 14.5.8]{Rob}, a group $G$ is a torsion $FC$-group if and only if $G$ is \emph{locally finite and normal}, that is, every finite subset of $G$ is contained in a normal finite subgroup of $G$:
in other words, the family of all finite normal subgroups of a group $G$, which is contained in $\F_C(G)$, is cofinal in $\Pf(G)$.
Therefore, 
\begin{center}
every torsion $FC$-group is finitely quasihamiltonian.
\end{center}


We summarize the relations among the classes of locally finite groups defined by the above mentioned properties in the following diagram. See Example~\ref{example} for examples witnessing that all inclusions are proper.
\begin{equation}\label{diagram}
\begin{tikzpicture}
\draw (-0.5,0) circle (1.5) (-2.5,-0.5)  node at (-1.6,-0.4) {(3)}
	 (0.5,0) circle (1.5) (2.5,-0.5)  node at (1.6,-0.4) {(4)}
 	 (0,0) circle (0.7) (0,-1.3)  node at (0,0.4) {(5)}
	(90:0.0cm) ellipse (3.5cm and 1.7cm) node at (-2.6,0.8) {(2)}
	(90:0.0cm) ellipse (4.5cm and 2.5cm) node [text=black] at (2.6,1.7) {(1)};
\end{tikzpicture}
\end{equation}
\begin{enumerate}[(1)]
\item Locally finite groups.
\item Finitely quasihamiltonian locally finite groups.
\item Quasihamiltonian locally finite groups.
\item Torsion $FC$-groups.
\item Torsion abelian groups.
\end{enumerate}

Going back to the algebraic entropy, we see that in the class of finitely quasihamiltonian locally finite groups we can compute the algebraic entropy more easily than in the whole class of locally finite groups. Indeed, for a quasihamiltonian locally finite group $G$, we have that $\mathcal{F}_C(G)$ is cofinal in $\Pf(G)$, 
so, for $\phi\in\End(G)$, we can compute (see Remark~\ref{rem1})
\begin{equation*}
h(\phi)=\sup\{H(\phi,F)\mid F\in\mathcal{F}_C(G)\}\,,
\end{equation*}
where, for every $F\in\F_C(G)$, each $T_n(\phi,F)$ is a subgroup of $G$ (see Lemma~\ref{comm}).

\medskip
Our main result is the following Addition Theorem for locally finite groups that are finitely quasihamiltonian.

\begin{teo}\label{atq}
If $G$ is a finitely quasihamiltonian locally finite group, then $AT(G)$ holds.
\end{teo}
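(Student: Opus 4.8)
The plan is to establish the two inequalities $h(\phi)\ge h(\phi\rest_H)+h(\bar\phi_{G/H})$ and $h(\phi)\le h(\phi\rest_H)+h(\bar\phi_{G/H})$ separately, for $\phi\in\End(G)$ and a $\phi$-invariant normal subgroup $H$. Throughout I use that, $G$ being finitely quasihamiltonian and locally finite, $\F_C(G)$ is cofinal in $\Pf(G)$, so $h(\phi)=\sup\{H(\phi,F):F\in\F_C(G)\}$, and that for such $F$ each $T_n(\phi,F)$ is a finite subgroup (Lemma~\ref{comm}). Fix $F\in\F_C(G)$ with $1\in F$, write $\pi\colon G\to G/H$, $T_n=T_n(\phi,F)$, $\bar T_n=\pi(T_n)=T_n(\bar\phi_{G/H},\pi F)$ and $K_n=T_n\cap H$, a finite subgroup of $H$. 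From the short exact sequence $1\to K_n\to T_n\xrightarrow{\pi}\bar T_n\to 1$ we get $\ell(T_n)=\ell(K_n)+\ell(\bar T_n)$, whence
\begin{equation*}
\lim_{n\to\infty}\tfrac1n\ell(K_n)=H(\phi,F)-H(\bar\phi_{G/H},\pi F),
\end{equation*}
the left-hand limit existing since the two terms on the right do.

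For the inequality $\ge$: given $E\in\Pf(H)$ and $S\in\Pf(G)$ with $1\in E$, $1\in S$, choose $F'\in\F_C(G)$ with $E\cup S\subseteq F'$. Then $T_n(\phi\rest_H,E)\subseteq T_n(\phi,F')\cap H$ and $T_n(\bar\phi_{G/H},\pi S)\subseteq\pi(T_n(\phi,F'))$, so splitting the cardinality of the finite subgroup $T_n(\phi,F')$ gives $\ell(T_n(\phi,F'))\ge\ell(T_n(\phi\rest_H,E))+\ell(T_n(\bar\phi_{G/H},\pi S))$; dividing by $n$ and letting $n\to\infty$ yields $H(\phi,F')\ge H(\phi\rest_H,E)+H(\bar\phi_{G/H},\pi S)$. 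Taking suprema over $E$ and $S$, and using that $\{\pi S:S\in\Pf(G)\}$ is cofinal in $\Pf(G/H)$ since $\pi$ is onto, gives $h(\phi)\ge h(\phi\rest_H)+h(\bar\phi_{G/H})$. (Already here one uses that $T_n(\phi,F')$ is a subgroup.)

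For the inequality $\le$, the displayed identity reduces everything to showing $\lim_n\tfrac1n\ell(K_n)\le h(\phi\rest_H)$ for every $F\in\F_C(G)$. The heart of the matter is a containment of the shape
\begin{equation*}
T_n(\phi,F)\cap H\ \subseteq\ T_{n-1}\!\big(\phi\rest_H,\ T_2(\phi,F)\cap H\big)\qquad(n\ge2),
\end{equation*}
the precise index shift being immaterial; granting it, $\tfrac1n\ell(K_n)\le\tfrac{n-1}{n}\cdot\tfrac{1}{n-1}\ell\big(T_{n-1}(\phi\rest_H,K_2)\big)\to H(\phi\rest_H,K_2)\le h(\phi\rest_H)$, and taking $\sup$ over $F\in\F_C(G)$ finishes the proof (this also recovers the Addition Theorem of~\cite{DGSZ} for torsion abelian groups, and subsumes the results of~\cite{GBST,XST}). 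I would prove the containment by induction on $n$: writing an element of $T_n(\phi,F)\cap H$ as $x_0\phi(x_1)\cdots\phi^{n-1}(x_{n-1})$ with $x_i\in F$, choosing a transversal $R\subseteq F$ of $\pi\rest_F$ with $1\in R$, and factoring $x_i=d_ir_i$ with $d_i\in D:=F\cap H$ and $r_i\in R$, one pushes the $d_i$ to the left using normality of $H$ ($gyg^{-1}\in H$ for $g\in G$, $y\in H$) to obtain the telescoped identity
\begin{equation*}
x_0\phi(x_1)\cdots\phi^{n-1}(x_{n-1})=d_0\cdot{}^{p_1}\phi(d_1)\cdot{}^{p_2}\phi^2(d_2)\cdots{}^{p_{n-1}}\phi^{n-1}(d_{n-1})\cdot p_n,\qquad p_k=r_0\phi(r_1)\cdots\phi^{k-1}(r_{k-1}).
\end{equation*}
Since the left side and every factor ${}^{p_k}\phi^k(d_k)$ lie in $H$, so does the tail $p_n$; one feeds the ``$d$-part'' into $T_{n-1}(\phi\rest_H,D)\subseteq T_{n-1}(\phi\rest_H,K_2)$ and treats $p_n$ — which has the same shape but with all $r_i\in R$ — by the inductive hypothesis together with the transversal decomposition of $T_{n-1}(\phi,F)$.

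The main obstacle is precisely the last step in the non-abelian case: the conjugating elements $p_k$ grow with $n$, so a priori the factors ${}^{p_k}\phi^k(d_k)$ range over arbitrarily large subsets of $H$, and it is not clear that the telescoped product lands in a $T$-set built from a single finite $E\subseteq H$. Overcoming this requires more than the mere fact that the $T_n(\phi,F)$ are subgroups: one must exploit the near-commutativity of finitely quasihamiltonian groups — in an $FC$-group each element of $H$ has only finitely many $G$-conjugates, and in a quasihamiltonian group the commutator structure is severely constrained — in order to absorb the conjugations into a fixed finite subset of $H$, i.e.\ to replace the maps $y\mapsto{}^{r}\phi(y)$ ($r\in R$) by $\phi\rest_H$ up to a bounded correction. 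In the abelian case this difficulty disappears and the induction goes through verbatim.
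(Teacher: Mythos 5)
Your first inequality is essentially the paper's Proposition~\ref{first} and is fine: you pass to a finite subgroup $F'\in\F_C(G)$ containing the given data, use that $T_n(\phi,F')$ is a subgroup (Lemma~\ref{comm}) and split its order along the exact sequence. The problem is the second inequality, and you have in fact named the gap yourself: the proposed containment $T_n(\phi,F)\cap H\subseteq T_{n-1}(\phi\rest_H,\,T_2(\phi,F)\cap H)$ is never established. Your telescoping identity produces factors ${}^{p_k}\phi^k(d_k)$ whose conjugators $p_k$ grow with $n$, so the product does not visibly land in any $T$-set over a fixed finite subset of $H$, and no concrete mechanism for ``absorbing the conjugations'' is supplied. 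Moreover, even in the torsion abelian case the residual term $p_n\in H\cap T_n(\phi,R)$ (with $R$ a transversal) need not lie in a set of the required form --- this is exactly why the proof in \cite{DGSZ} is long and structure-theoretic --- so the claim that the induction ``goes through verbatim'' there is also unsubstantiated. As it stands the $\le$ direction is not proved.

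The paper avoids this obstacle entirely, and the contrast is instructive. Instead of trying to locate $T_n(\phi,D)\cap H$ inside a trajectory of $\phi\rest_H$, Proposition~\ref{second} bounds $\ell(T_{2^n}(\phi,D))\le \ell(T_{2^n}(\phi,D),T_{2^n}(\phi,\bar H_0))+\ell(T_{2^n}(\phi,\bar H_0))$, where $\ell(X,B)=\log[XB:B]$ and $\bar H_0\in\F_C(H)$ is a finite subgroup of $H$ containing $H\cap T_{2^M}(\phi,D)$ for a suitably chosen $M$. The second summand is controlled by $h(\phi\rest_H)$ directly. For the first summand the two key points are: (i) along the subsequence $2^n$ the normalized relative term $\ell(T_{2^n}(\phi,D),T_{2^n}(\phi,\bar H_0))/2^n$ is \emph{decreasing} (Lemma~\ref{6.8}; this is where one needs the $T$-sets over $\F_C$-subgroups to be subgroups), so it suffices to bound it at the single stage $n=M$; and (ii) at that stage it equals $\ell(T_{2^M}(\phi,D),H)/2^M$, which by Proposition~\ref{6.2cor} is within $\varepsilon$ of $H(\bar\phi_{G/H},\pi(D))$. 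Your opening identity $\ell(T_n)=\ell(T_n\cap H)+\ell(\pi(T_n))$ is correct but does not by itself reduce the problem: bounding $\ell(T_n\cap H)/n$ by $h(\phi\rest_H)$ is precisely the hard step, and the relative-index bookkeeping above is what replaces your missing containment.
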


Its proof is divided in the proof of two inequalities. The easier one (see Proposition~\ref{first}) is given in Section~\ref{fqhsec}, where we also show useful properties of finitely quasihamiltonian locally finite groups for the computation of the algebraic entropy. The proof of the second inequality (see Proposition~\ref{second}) is based first of all on the passage from the sequence $({\ell(T_{n}(\phi,X))}/{n})_{n\in\N}$, used in \eqref{limite} for the definition of algebraic entropy, to its subsequence $({\ell(T_{2^n}(\phi,X))}/{2^n})_{n\in\N}$, which turns out to be decreasing (see Proposition~\ref{6.8a}). Moreover, in Section~\ref{ellsec}, we use the auxiliary function $\ell(-,-)$ introduced in \cite{DFG} in a much more general context.

\smallskip
The following is a direct consequence of Theorem~\ref{atq}.

\begin{coro}\label{ATcor}
If $G$ is a locally finite group which is either an $FC$-group or quasihamiltonian, then $AT(G)$ holds.
\end{coro}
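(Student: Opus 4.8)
The plan is to observe that each of the two hypotheses places $G$ inside the class already handled by Theorem~\ref{atq}, namely the finitely quasihamiltonian locally finite groups, so that the corollary follows by merely unwinding definitions; all the substance is in Theorem~\ref{atq} itself.

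First I would treat the quasihamiltonian case. As recorded just after Definition~\ref{fqhdef}, if every subgroup of $G$ is permutable then $\F_C(G)=\F(G)$, so in particular $\F_C(G)$ is cofinal in $\F(G)$ and $G$ is finitely quasihamiltonian. Hence a locally finite quasihamiltonian group is a finitely quasihamiltonian locally finite group, and Theorem~\ref{atq} applies directly to give that $AT(G)$ holds.

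Next I would treat the $FC$-case. The only point here is that a locally finite group is torsion (each cyclic subgroup, being finitely generated, is finite), so a locally finite $FC$-group is the same thing as a torsion $FC$-group. By \cite[Theorem 14.5.8]{Rob}, such a $G$ is locally finite and normal: every finite subset of $G$ lies in some finite normal subgroup $N$. Each such $N$ belongs to $\F_C(G)$, since $NE=EN$ is a subgroup for every $E\in\F(G)$ by normality of $N$, and the family of these $N$ is cofinal in $\Pf(G)$; a fortiori $\F_C(G)$ is cofinal in $\F(G)$. Thus $G$ is again a finitely quasihamiltonian locally finite group, and Theorem~\ref{atq} yields $AT(G)$, as desired.

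There is essentially no obstacle to overcome: the argument is exactly the verification that the inclusions (3)$\subseteq$(2) and (4)$\subseteq$(2) of diagram~\eqref{diagram} hold, combined with the main theorem. The one mild subtlety worth stating explicitly is the equivalence, for $FC$-groups, between local finiteness and torsionness, which is what permits invoking the characterization of torsion $FC$-groups as the locally finite and normal groups, and hence the cofinality of the finite normal subgroups inside $\F_C(G)$.
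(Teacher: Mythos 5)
Your proof is correct and follows exactly the route the paper intends: the corollary is stated there as a direct consequence of Theorem~\ref{atq} via the inclusions (3)$\subseteq$(2) and (4)$\subseteq$(2) already recorded in the introduction, which is precisely what you verify. Your explicit remark that a locally finite $FC$-group is torsion (so that the characterization of torsion $FC$-groups as locally finite and normal groups applies) is a small detail the paper leaves implicit, but it is the same argument.
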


Clearly, this result extends a consequence of \cite[Corollary 7.2]{GBST} stating that $AT(G,\phi,H)$ holds for every torsion $FC$-group $G$, every $\phi\in\End(G)$ and every $\phi$-invariant normal subgroup $H$ of $G$ with $\phi\restriction_H$ surjective and $\bar\phi_{G/H}$ injective. 
Moreover, it extends one of the main results from \cite{XST}, namely, that $AT(G)$ holds for every locally finite group $G$ which is a quasihamiltonian $FC$-group.
Indeed, Theorem~\ref{atq} covers the family ($2$) in the above diagram, and Corollary~\ref{ATcor}  the union of ($3$) and ($4$), while the result from \cite{GBST} concerns the groups in ($4$) and the result from \cite{XST} the groups in the intersection of ($3$) and ($4$).

\smallskip
The validity of the Addition Theorem for the class ($5$) corresponds to the above mentioned result from \cite{DGSZ}:

\begin{coro}
If $G$ is a torsion abelian group, then $AT(G)$ holds.
\end{coro}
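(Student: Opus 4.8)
The plan is to obtain this statement as an immediate instance of Corollary~\ref{ATcor}, for which the only points to check are that a torsion abelian group is locally finite and that it is quasihamiltonian. For the first, I would recall that any finitely generated subgroup of a torsion abelian group $G$ is itself a finitely generated torsion abelian group, hence finite (being a finite direct sum of finite cyclic groups, by the structure theorem for finitely generated abelian groups); thus $\F(G)$ is cofinal in $\Pf(G)$ and $G$ is locally finite.

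For the second, I would observe that $G$, being abelian, is quasihamiltonian: for any two subgroups $H$ and $K$ of $G$ one has $HK=KH$, and this set is a subgroup of $G$ --- in fact the subgroup generated by $H\cup K$ --- so every subgroup of $G$ is permutable. Equivalently, $\mathcal F_C(G)=\F(G)$, so $G$ is a quasihamiltonian locally finite group; that is, $G$ sits in class~(3) --- indeed in class~(5) --- of the diagram~\eqref{diagram}.

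Consequently $G$ satisfies the hypotheses of Corollary~\ref{ATcor} (equivalently of Theorem~\ref{atq}, since every quasihamiltonian locally finite group is finitely quasihamiltonian), and therefore $AT(G)$ holds. Since the statement is merely a formal specialization of results already established, there is no genuine obstacle here: the two elementary facts used above were in fact already noted in the discussion preceding the corollary.
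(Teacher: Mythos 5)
Your proposal is correct and matches the paper's intent: the corollary is stated there as an immediate specialization of Corollary~\ref{ATcor} (equivalently of Theorem~\ref{atq}), since a torsion abelian group is locally finite and quasihamiltonian, i.e., lies in class~(5) of the diagram~\eqref{diagram}. The two elementary verifications you supply (finitely generated torsion abelian groups are finite, and $HK=KH$ is a subgroup in the abelian case) are exactly the facts the paper takes for granted.
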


We underline that the proof of Theorem~\ref{atq} presented in this paper was inspired by ideas contained in \cite{DFG}. Moreover, it is much shorter and simpler than that in \cite{DGSZ}, and it follows a different path. The proofs of the mentioned results from \cite{GBST} and \cite{XST} use a third different approach inspired by that in  \cite{CGBalg,CGBtop,GBVtop,SV}, based on the so-called Limit-free Formula (see \cite{CGBalg,DGBlf,GBtdlc,Y}).



\subsection*{Acknowledgements}

It is a pleasure to thank Dikran Dikranjan and Pablo Spiga for their useful comments and suggestions.

\smallskip
The first named author was partially supported by the project ``Topological, Categorical and Dynamical Methods in Algebra'' (ToCaDyMA) funded by the Department of Mathematics, Computer Science and Physics of the University of Udine, and was partially supported also by the National Group for Algebraic and Geometric Structures, and their Applications (GNSAGA – INdAM).

\section{Finitely quasihamiltonian locally finite groups}\label{fqhsec}

First of all we give examples showing that the inclusions among the classes in the diagram \eqref{diagram} above are all proper. In particular we see that not all locally finite groups are finitely quasihamiltonian.

\begin{esem}\label{example}
\begin{enumerate}[(a)]
\item The quaternion group $Q_8$ is non abelian, but it is a quasihamiltonian torsion $FC$-group. This means that ($5$) is properly contained in the intersection of ($3$) and ($4$).

\item All finite groups are torsion $FC$-groups, that is, the class of finite groups is contained in ($4$).
In particular, the symmetric group $S_3$ is in ($4$), but it is not in $(3)$ as it is not quasihamiltonian.

\item In \cite{XST} the following example was given of a quasihamiltonian locally finite group $H$ that is not an $FC$-group; this means that $H$ is in ($3$), but it is not in $(4)$.

Let $H=\Z_{3^2}^\N\rtimes_\alpha\Z_3$, where $\alpha$ is the action of $\Z_3$ on $\Z_{9}^\N$ defined, for every $x\in\Z_3$ and every $a\in\Z_9^\N$, by $\alpha(x)(a)=4^xa$. Since $H$ is a non-abelian $3$-group that satisfies \cite[Theorem 3]{Iwa}, so $H$ is quasihamiltonian; but $H'$ is infinite and so $H$ cannot be an $FC$-group by \cite[Proposition 2.8]{XST}.

\item As a consequence of the previous items, the group $S_3\times H$ is a finitely quasihamiltonian locally finite group which is neither an $FC$-group nor quasihamiltonian. So, the union of ($3$) and ($4$) is properly contained in ($2$).

\item The finitary symmetric group $\mathcal S_{fin}(\N_+)$ is locally finite but not finitely quasihamiltonian. So, ($2$) is properly contained in ($1$).

In fact, $\mathcal S_{fin}(\N_+)=\bigcup_{n\in\N_+}\mathcal S_n$, where $\mathcal S_n=\{\sigma\in\mathcal S_{fin}(\N)\mid \mathrm{supp}(\sigma)=\{1,\ldots,n\}\}$, so $\mathcal S_{fin}(\N_+)$ is locally finite. Now assume that, for a fixed $n\in\N$ with $n>1$, there exists $H\in\mathcal{F}_C(\mathcal{S}_{fin}(\mathbb{N_+}))$ that contains $\mathcal{S}_n$. Since $H$ is finite, there exists $m\in\N_+$ such that $H\subseteq\mathcal{S}_m$. Consider the cyclic subgroup $N=\langle\tau\rangle$, where $\tau=( n\ m+1)$. Since $H\in\mathcal{F}_C(\mathcal{S}_{fin}(\mathbb{N}_+))$, we have that $HN=NH$, and so there exists $\sigma\in H$ such that $(1\ n\ m+1)=(1\ n)\tau=\tau\sigma$. We conclude that necessarily $\sigma(1)=m+1$, but this is absurd because $\sigma\in H\subseteq \mathcal{S}_m$.
\end{enumerate}
\end{esem}

The next proposition in particular shows that the class of finitely quasihamiltonian locally finite groups is stable under taking subgroups and quotients. The technical conditions will be useful in the computation of the algebraic entropy with respect to Remark~\ref{rem1}.

\begin{pro}\label{norloc}
Let $G$ be a finitely quasihamiltonian group and $H$ a subgroup of $G$. Then:
\begin{enumerate}[(a)]
\item $\bar{\mathcal{F}}_C(H)=\{F\cap H\mid F\in\mathcal{F}_C(G)\}\subseteq \mathcal{F}_C(H)$ is cofinal in $\F(H)$, and in particular $H$ is finitely quasihamiltonian; 
\item if $G$ is locally finite and $H$ is normal in $G$, then $\bar{\mathcal{F}}_C(G/H)=\{\pi(F)\mid F\in\mathcal{F}_C(G)\}\subseteq\mathcal{F}_C(G/H)$ is cofinal in $\F(G/H)$, and in particular $G/H$ is finitely quasihamiltonian. 
\end{enumerate}
\end{pro}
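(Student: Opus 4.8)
The plan is to treat the two statements separately; in each case I would first verify the claimed set-theoretic description (i.e.\ membership in $\mathcal F_C$) and then cofinality, from which the ``finitely quasihamiltonian'' conclusion is immediate.

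For part (a), fix $F\in\mathcal F_C(G)$. Then $F\cap H$ is a finite subgroup of $H$, so the point is to show $(F\cap H)E=E(F\cap H)$ for every $E\in\mathcal F(H)$. Since $E$ is in particular a finite subgroup of $G$ and $F\in\mathcal F_C(G)$, we already know $FE=EF$, and a Dedekind-type argument upgrades this: given $x=fe$ with $f\in F\cap H$ and $e\in E$, use $FE=EF$ to write $x=e'f'$ with $e'\in E$ and $f'\in F$; since $x\in H$ (because $f,e\in H$) and $e'\in E\subseteq H$, we get $f'=e'^{-1}x\in F\cap H$, whence $x\in E(F\cap H)$, and the reverse inclusion is symmetric. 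This gives $\bar{\mathcal F}_C(H)\subseteq\mathcal F_C(H)$. For cofinality, take $E\in\mathcal F(H)$; then $E\in\mathcal F(G)$, so by cofinality of $\mathcal F_C(G)$ in $\mathcal F(G)$ there is $F\in\mathcal F_C(G)$ with $E\subseteq F$, hence $E\subseteq F\cap H\in\bar{\mathcal F}_C(H)$. Thus $\mathcal F_C(H)$ is cofinal in $\mathcal F(H)$.

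For part (b), write $\pi\colon G\to G/H$ for the canonical projection. The key preliminary observation, and the one place where local finiteness of $G$ is genuinely needed, is that every $\bar E\in\mathcal F(G/H)$ admits a lift to some $E\in\mathcal F(G)$ with $\pi(E)=\bar E$: choose a finite set $S\subseteq G$ mapping bijectively onto $\bar E$ and set $E=\langle S\rangle$, which is finite precisely because $G$ is locally finite; then $\pi(E)=\langle\pi(S)\rangle=\langle\bar E\rangle=\bar E$. Now fix $F\in\mathcal F_C(G)$; then $\pi(F)$ is a finite subgroup of $G/H$, and for any $\bar E\in\mathcal F(G/H)$, choosing a lift $E\in\mathcal F(G)$ and applying $\pi$ to $FE=EF$ yields $\pi(F)\bar E=\pi(FE)=\pi(EF)=\bar E\,\pi(F)$; hence $\pi(F)\in\mathcal F_C(G/H)$, so $\bar{\mathcal F}_C(G/H)\subseteq\mathcal F_C(G/H)$. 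For cofinality, given $\bar E\in\mathcal F(G/H)$, lift it to $E\in\mathcal F(G)$, pick $F\in\mathcal F_C(G)$ with $E\subseteq F$, and note $\bar E=\pi(E)\subseteq\pi(F)\in\bar{\mathcal F}_C(G/H)$.

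I expect the only delicate points to be the modular-law manipulation in part (a) --- where one must be careful to land back inside $F\cap H$, not merely in $F$ --- and the use of local finiteness in part (b) to ensure that lifts of finite subgroups of $G/H$ can be taken finite in $G$; everything else is a direct unwinding of the definitions of $\mathcal F_C(-)$ and of cofinality.
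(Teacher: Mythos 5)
Your proof is correct and follows essentially the same route as the paper's: reduce commutation in $H$ (resp.\ $G/H$) to the known relation $FE=EF$ in $G$, and deduce cofinality from cofinality of $\mathcal F_C(G)$ together with intersection (resp.\ lifting via local finiteness). The only difference is that you spell out the details the paper leaves as ``straightforward'' --- the Dedekind-style verification that $(F\cap H)E=E(F\cap H)$ and the construction of a finite lift $E=\langle S\rangle$ --- and both are done correctly.
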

\begin{proof}
Since $G$ is finitely quasihamiltonian $\mathcal{F}_C(G)$ is cofinal in $\mathcal{F}(G)$.

(a) Since $\mathcal{F}_C(G)$ is cofinal in $\mathcal{F}(G)$, $\bar{\mathcal{F}}_C(H)$ is cofinal in $\mathcal{F}(H)$. So, it remains to show that $$\bar{\mathcal{F}}_C(H)\subseteq\mathcal{F}_C(H)\,.$$ Fix $F\in\mathcal{F}_C(G)$ and $E\in\mathcal{F}(H)$. Then $E\in\mathcal{F}(G)$ and therefore $EF=EF$. 
Since $E\subseteq H$, it is straightforward to verify that also $(F\cap H)E=E(F\cap H)$. Hence, $F\cap H\in\F_C(H)$.

(b) First note that $\bar{\mathcal{F}}_C(G/H)$ is cofinal in $\mathcal{F}(G/H)$. Indeed, let $\bar F\in \mathcal{F}(G/H)$. Since $G$ is locally finite, there exists $F\in\F(G)$ such that $\pi(F)=\bar F$. Since $G$ is finitely quasihamiltonian, there exists $\tilde F\in \mathcal{F}_C(G)$ such that $F\subseteq \tilde F$, and so $\bar F\subseteq \pi(\tilde F)$.

It remains to show that $$\bar{\mathcal{F}}_C(G/H)\subseteq\mathcal{F}_C(G/H)\,.$$  Fix $F\in\mathcal{F}_C(G)$ and $\bar E\in\mathcal{F}(G/H)$. 
Since $G$ is locally finite, there exists $E\in\F(G)$ such that $\pi(E)=\bar E$. Therefore, $FE=EF$, and so $\pi(F)\bar E=\bar E\pi(F)$.
Hence, $\pi(F)\in\F_C(G/H)$.
\end{proof}

We recall a useful basic property of the algebraic entropy that allows us to compute the algebraic entropy taking in account cofinal subfamilies of $\Pf(G)$ (e.g., $\F_C(G)$ when $G$ is finitely quasihamiltonian and locally finite).

\begin{rem}\label{rem1}
Let $G$ be a group and $\phi\in\End(G)$.
The function $H(\phi,-):\Pf(G)\to\R_{\geq0}$ is non-decreasing, that is, if $X,X'\in \Pf(G)$ and $X\subseteq X'$, then $H(\phi,X)\leq H(\phi,X')$. Therefore, if $\mathcal{F}$ is a cofinal subfamily of $\Pf(G)$ with respect to the inclusion, then
$$h(\phi)=\text{sup}\{H(\phi,X)\mid X\in\mathcal{F}\}\,.$$
\end{rem}

Next we see that, fixed $\phi\in\End(G)$, for every $F\in\mathcal{F}_C(G)$, each $T_n(\phi,F)$ is a subgroup of $G$.

\begin{lemma}\label{comm}
Let $G$ be a group, $\phi\in\End(G)$ and $F\in \mathcal{F}_C(G)$. Then for all $n,m\in\mathbb{N}$
$$\phi^{n}(F)\phi^m(F)=\phi^m(F)\phi^{n}(F)\,.$$
Consequently, $T_n(\phi, F)$ is a subgroup of $G$ for all $n\in\mathbb{N}$.
\end{lemma}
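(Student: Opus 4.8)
The plan is to reduce the commutation of the images $\phi^n(F)$ and $\phi^m(F)$ to the defining property of $\mathcal{F}_C(G)$, and then to bootstrap from a two-factor statement to the fact that the $n$-fold product $T_n(\phi,F)$ is a subgroup. First I would observe that $\phi$ is an endomorphism of $G$, so for every $k\in\mathbb{N}$ the image $\phi^k(F)$ is a subgroup of $G$, and moreover it is \emph{finite} (being the image of the finite set $F$), hence $\phi^k(F)\in\mathcal{F}(G)$. Now, since $F\in\mathcal{F}_C(G)$, by definition $FE=EF$ for every $E\in\mathcal{F}(G)$; but this is not quite what is needed, because we want $\phi^n(F)$ to permute with $\phi^m(F)$ for general $n,m$, not $F$ itself with an arbitrary finite subgroup.

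The key step is therefore to note that $\mathcal{F}_C(G)$ is stable under applying $\phi$: if $F\in\mathcal{F}_C(G)$ then $\phi(F)\in\mathcal{F}_C(G)$. Indeed, given any $E\in\mathcal{F}(G)$, one should like to conclude $\phi(F)E=E\phi(F)$. The honest route is to use that $F$ permutes with \emph{every} finite subgroup, in particular with subgroups living "above" $\phi$; concretely, $\phi(F)\cdot\phi(F') = \phi(FF') = \phi(F'F) = \phi(F')\cdot\phi(F)$ whenever $F,F'\in\mathcal{F}_C(G)$, which already gives $\phi^n(F)$ commuting with $\phi^n(F')$ for the same power $n$. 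To handle different powers $n\le m$, write $\phi^m(F)=\phi^n(\phi^{m-n}(F))$ and apply $\phi^n$ to the identity $F\,\phi^{m-n}(F)=\phi^{m-n}(F)\,F$, which holds because $\phi^{m-n}(F)\in\mathcal{F}(G)$ and $F\in\mathcal{F}_C(G)$. This yields $\phi^n(F)\phi^m(F)=\phi^m(F)\phi^n(F)$, and by symmetry the case $m\le n$ follows as well; this settles the first assertion of the lemma.

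For the consequence, I would argue that $T_n(\phi,F)=F\phi(F)\cdots\phi^{n-1}(F)$ is a subgroup of $G$ by induction on $n$. The base case $n\le 1$ is clear since $T_0(\phi,F)=\{1\}$ and $T_1(\phi,F)=F$ are subgroups. For the inductive step, note that each $\phi^k(F)$ is a subgroup, and by the commutation just proved, $\phi^{n-1}(F)$ permutes with each of $F,\phi(F),\dots,\phi^{n-2}(F)$, hence with their product $T_{n-1}(\phi,F)$; since a product $AB$ of two subgroups with $AB=BA$ is again a subgroup, and $T_{n-1}(\phi,F)$ is a subgroup by induction, we get that $T_n(\phi,F)=T_{n-1}(\phi,F)\,\phi^{n-1}(F)$ is a subgroup. (A symmetric version, grouping the product the other way, also works.)

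The main obstacle is the bookkeeping in the inductive step: one must check carefully that a single subgroup permuting with each factor of a product of mutually permuting subgroups permutes with the whole product, and that the product is then a subgroup — this uses the elementary but slightly fiddly fact that if $A,B,C$ are subgroups with $AB=BA$, $AC=CA$, $BC=CB$, then $(AB)C=C(AB)$ and $ABC$ is a subgroup. Once this standard permutable-subgroups manipulation is in hand, everything else is immediate from the definition of $\mathcal{F}_C(G)$ and the fact that $\phi$ maps finite subgroups to finite subgroups.
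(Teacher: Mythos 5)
Your proposal is correct and takes essentially the same approach as the paper: both prove $\phi^{n}(F)\phi^{m}(F)=\phi^{m}(F)\phi^{n}(F)$ for $n\leq m$ by applying $\phi^{n}$ to the identity $F\,\phi^{m-n}(F)=\phi^{m-n}(F)\,F$ (valid since $\phi^{m-n}(F)\in\mathcal{F}(G)$ and $F\in\mathcal{F}_C(G)$), and both then show $T_n(\phi,F)$ is a subgroup by induction, using that $\phi^{n}(F)$ permutes with the product $T_n(\phi,F)$ and that a permutable product of two subgroups is a subgroup. One minor caveat: your aside claiming that $\mathcal{F}_C(G)$ is stable under $\phi$ is neither justified nor needed (an arbitrary $E\in\mathcal{F}(G)$ need not lie in the image of $\phi$), but your actual argument rightly does not rely on it.
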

\begin{proof}
Since $F\in \mathcal{F}_C(G)$ we have that $F\phi^k(F)=\phi^k(F)F$ for all $k\in\mathbb{N}$. Therefore, fixed $m$ and $n\in\mathbb{N }$ with $n\leq m$,
\begin{equation*}
\phi^{n}(F)\phi^m(F)=\phi^{n}(F\phi^{m-n}(F))=\phi^n(\phi^{m-n}(F)F)=\phi^m(F)\phi^{n}(F)\,.
\end{equation*}

To prove the second assertion, we proceed by induction. For $n\in\{0,1\}$ the assertion is verified. 
Suppose that $T_n(\phi, F)$ is a subgroup of $G$ for some $n\in\N_+$; then, by the first part of the lemma,
$$T_{n+1}(\phi,F)=T_n(\phi,F)\phi^n(F)=\phi(F)^nT_n(\phi,F),$$
and so $T_{n+1}(\phi,F)$ is a subgroup of $G$.
\end{proof}

Thanks to Proposition~\ref{norloc}, Remark~\ref{rem1} and Lemma~\ref{comm}, we can immediately prove one of the inequalities needed in Theorem~\ref{atq}.

\begin{pro}\label{first}
Let $G$ be a finitely quasihamiltonian locally finite group, $\phi\in\End(G)$, and $H$ a $\phi$-invariant normal subgroup of $G$. Then
$$h(\phi)\geq h(\phi\rest_H)+h(\bar \phi_{G/H})\,.$$
\end{pro}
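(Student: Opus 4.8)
The plan is to reduce everything, via Proposition~\ref{norloc} and Remark~\ref{rem1}, to a single computation with trajectories $T_n(\phi,F)$ for $F$ ranging in $\mathcal F_C(G)$, and then to pass to suprema. Write $\pi\colon G\to G/H$ for the canonical projection. By Proposition~\ref{norloc}, both $H$ and $G/H$ are again finitely quasihamiltonian locally finite groups, with $\bar{\mathcal F}_C(H)=\{F\cap H\mid F\in\mathcal F_C(G)\}$ cofinal in $\Pf(H)$ and $\bar{\mathcal F}_C(G/H)=\{\pi(F)\mid F\in\mathcal F_C(G)\}$ cofinal in $\Pf(G/H)$; moreover $\mathcal F_C(G)$ is cofinal in $\Pf(G)$. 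Hence, by Remark~\ref{rem1},
\[
h(\phi)=\sup_{F\in\mathcal F_C(G)}H(\phi,F),\qquad h(\phi\rest_H)=\sup_{F\in\mathcal F_C(G)}H(\phi\rest_H,F\cap H),\qquad h(\bar\phi_{G/H})=\sup_{F\in\mathcal F_C(G)}H(\bar\phi_{G/H},\pi(F)).
\]

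Next I would fix $F\in\mathcal F_C(G)$ and $n\in\N$ and establish the pointwise estimate. Since $\pi$ is a homomorphism with $\pi\circ\phi=\bar\phi_{G/H}\circ\pi$, one has $\pi(T_n(\phi,F))=T_n(\bar\phi_{G/H},\pi(F))$. Since $H$ is $\phi$-invariant, $\phi^k(F\cap H)\subseteq\phi^k(F)\cap H$ for every $k$, so directly from the definition of $T_n$ we get $T_n(\phi\rest_H,F\cap H)\subseteq T_n(\phi,F)\cap H$. Now by Lemma~\ref{comm} the set $T_n(\phi,F)$ is a finite subgroup of $G$; therefore $T_n(\phi,F)\cap H$ is the kernel of $\pi$ restricted to $T_n(\phi,F)$, and the elementary index identity gives
\[
|T_n(\phi,F)|=|T_n(\phi,F)\cap H|\cdot|\pi(T_n(\phi,F))|\;\geq\;|T_n(\phi\rest_H,F\cap H)|\cdot|T_n(\bar\phi_{G/H},\pi(F))|.
\]
Applying $\ell=\log|{-}|$, dividing by $n$ and letting $n\to\infty$ (all three limits exist by \cite[Lemma 5.1.1]{DGBpak}), we obtain
\[
H(\phi,F)\;\geq\;H(\phi\rest_H,F\cap H)+H(\bar\phi_{G/H},\pi(F))\qquad\text{for every }F\in\mathcal F_C(G).
\]

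Finally I would take suprema, which is the only point requiring some care, since $\mathcal F_C(G)$ need not be closed under finite unions. Fix arbitrary reals $a<h(\phi\rest_H)$ and $b<h(\bar\phi_{G/H})$. Using the two sup-formulas above, choose $F_1,F_2\in\mathcal F_C(G)$ with $H(\phi\rest_H,F_1\cap H)>a$ and $H(\bar\phi_{G/H},\pi(F_2))>b$. Since $\mathcal F_C(G)$ is cofinal in $\Pf(G)$, there is $F\in\mathcal F_C(G)$ with $F_1\cup F_2\subseteq F$; then $F_1\cap H\subseteq F\cap H$ and $\pi(F_2)\subseteq\pi(F)$, so by the monotonicity in Remark~\ref{rem1} also $H(\phi\rest_H,F\cap H)>a$ and $H(\bar\phi_{G/H},\pi(F))>b$. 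Combining with the pointwise estimate,
\[
h(\phi)\;\geq\;H(\phi,F)\;\geq\;H(\phi\rest_H,F\cap H)+H(\bar\phi_{G/H},\pi(F))\;>\;a+b.
\]
Since this holds for all such $a,b$ and $\sup\{a+b\mid a<h(\phi\rest_H),\ b<h(\bar\phi_{G/H})\}=h(\phi\rest_H)+h(\bar\phi_{G/H})$ in $[0,\infty]$, we conclude $h(\phi)\geq h(\phi\rest_H)+h(\bar\phi_{G/H})$. There is no genuine obstacle here: the numerical core is just the index identity $|T_n(\phi,F)|=|T_n(\phi,F)\cap H|\cdot|\pi(T_n(\phi,F))|$, available precisely because Lemma~\ref{comm} makes $T_n(\phi,F)$ a subgroup, and the only thing to be careful about is the bookkeeping with the three cofinal families $\mathcal F_C(G)$, $\bar{\mathcal F}_C(H)$, $\bar{\mathcal F}_C(G/H)$ provided by Proposition~\ref{norloc}.
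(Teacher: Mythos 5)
Your argument is correct and follows essentially the same route as the paper: the core step in both is that Lemma~\ref{comm} makes $T_n(\phi,F)$ a subgroup, so that $T_n(\phi\rest_H,F\cap H)\subseteq T_n(\phi,F)\cap H=\ker\bigl(\pi\rest_{T_n(\phi,F)}\bigr)$ and the index identity yields $\ell(T_n(\phi,F))\geq\ell(T_n(\phi\rest_H,F\cap H))+\ell(T_n(\bar\phi_{G/H},\pi(F)))$, after which one divides by $n$ and passes to suprema via Proposition~\ref{norloc} and Remark~\ref{rem1}. The only (immaterial) difference is in combining the two suprema: the paper takes two subgroups $F,E\in\mathcal F_C(G)$ and works with the single permutable product $B=FE$, whereas you instead invoke cofinality of $\mathcal F_C(G)$ to find a common $F\supseteq F_1\cup F_2$ and use monotonicity, which is equally valid.
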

\begin{proof}
Let $\pi:G\rightarrow G/H$ be the canonical projection. 
Fix $F,E\in\F_C(G)$ and consider $F\cap H\in \bar\F_C(H)$ and $\pi(E)\in \mathcal{F}_C(G/H)$.
Let $B=FE$, $A=B\cap H$, and $C=\pi(B)$; then $B\in\F_C(G)$, $F\cap H\subseteq A\in \bar\F_C(H)$, and $\pi(E)\subseteq C\in\bar\F_C(G/H)$.

Since $\pi(T_n(\phi,B))=T_n(\bar\phi_{G/H},\pi(B))=T_n(\bar\phi_{G/H},C)$, the exact sequence $$0\rightarrow A\rightarrow B\rightarrow C\rightarrow 0$$ gives rise, for every $n\in\N$, to the sequence
$$T_n(\phi\rest_H,A)\rightarrow T_n(\phi,B)\rightarrow T_n(\bar\phi_{G/H}, C)\,,$$
where all are subgroups in view of Lemma~\ref{comm}.
As $T_n(\phi\rest_H,A)\subseteq \ker(\pi\rest_{T_n(\phi,B)})$,
we have that
$$\ell(T_n(\phi\rest_H,A)+\ell(T_n(\bar\phi_{G/H},C))\leq \ell(\ker(\pi\rest_{T_n(\phi,B)}))+ \ell(T_n(\bar\phi_{G/H},C))=\ell(T_n(\phi,B))\,.$$
Dividing by $n$ and taking the limit, we conclude that
$$H(\phi\rest_H, F\cap H)+ H(\bar\phi_{G/H}, \pi(E))\leq H(\phi\rest_H, A)+H(\bar\phi_{G/H},C)\leq H(\phi,B)\leq h(\phi)\,.$$
Since $E,F\in\F_C(G)$ where chosen arbitrarily, and in view of Proposition~\ref{norloc} and Remark~\ref{rem1}, we get the thesis.
\end{proof}

\section{Proof of the Addition Theorem}\label{ellsec}

We start showing that in order to compute the algebraic entropy $H(\phi,X)$ we can choose the suitable subsequence $({\ell(T_{2^n}(\phi,X))}/{2^n})_{n\in\N}$ of $({\ell(T_{n}(\phi,X))}/{n})_{n\in\N}$, which is decreasing.

\begin{pro}\label{6.8a}
Let $G$ be a group, $\phi\in\End(G)$ and $X\in\Pf(G)$ with $1\in X$. Then:
\begin{enumerate}[(a)]
\item the function
\begin{equation*}
n\mapsto \frac{\ell(T_{2^n}(\phi,X))}{2^n}
\end{equation*}
is decreasing;
\item $H(\phi,X)=\inf_{n\in\mathbb{N}} \frac{\ell(T_{2^n}(\phi,X))}{2^n}$.
\end{enumerate}
\end{pro}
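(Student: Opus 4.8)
The plan is to reduce everything to a single submultiplicativity estimate for the sets $T_n(\phi,X)$ along the powers of $2$, and then to invoke the already-known fact that $H(\phi,X)=\lim_n \ell(T_n(\phi,X))/n=\inf_n \ell(T_n(\phi,X))/n$ from \cite[Lemma 5.1.1]{DGBpak}.

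First I would record the elementary ``cocycle'' identity: for all $n,m\in\N$,
\begin{equation*}
T_{n+m}(\phi,X)=T_n(\phi,X)\,\phi^n\!\big(T_m(\phi,X)\big),
\end{equation*}
which follows at once by splitting the product $X\phi(X)\cdots\phi^{n+m-1}(X)$ after the $n$-th factor and pulling $\phi^n$ out of the tail. Taking $m=n$ gives $T_{2n}(\phi,X)=T_n(\phi,X)\,\phi^n(T_n(\phi,X))$. Now, for any finite subsets $A,B$ of a group one has $|AB|\le|A|\,|B|$, and for any map $\psi$ one has $|\psi(Y)|\le|Y|$; applying these with $A=T_n(\phi,X)$, $B=\phi^n(T_n(\phi,X))$ yields
\begin{equation*}
|T_{2n}(\phi,X)|\le|T_n(\phi,X)|\cdot|\phi^n(T_n(\phi,X))|\le|T_n(\phi,X)|^2,
\end{equation*}
that is, $\ell(T_{2n}(\phi,X))\le 2\,\ell(T_n(\phi,X))$. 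Specializing to $n=2^k$ gives $\ell(T_{2^{k+1}}(\phi,X))\le 2\,\ell(T_{2^k}(\phi,X))$, hence
\begin{equation*}
\frac{\ell(T_{2^{k+1}}(\phi,X))}{2^{k+1}}\le\frac{\ell(T_{2^k}(\phi,X))}{2^k},
\end{equation*}
which is exactly part (a). For part (b), the sequence $\big(\ell(T_{2^n}(\phi,X))/2^n\big)_{n}$ is a subsequence of $\big(\ell(T_n(\phi,X))/n\big)_n$, which converges to $H(\phi,X)$, so the subsequence also converges to $H(\phi,X)$; being decreasing by part (a), its limit coincides with its infimum, whence $H(\phi,X)=\inf_{n\in\N}\ell(T_{2^n}(\phi,X))/2^n$.

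I do not expect a genuine obstacle here: the only point requiring a little care is the bookkeeping in the cocycle identity and the observation that $\phi^n$ may collapse $T_n(\phi,X)$ but never enlarges it, so that the square bound $|T_{2n}(\phi,X)|\le|T_n(\phi,X)|^2$ is valid without any injectivity or normality assumption. The hypothesis $1\in X$ is not needed for the inequality in (a); it is only used, via \cite[Lemma 5.1.1]{DGBpak}, to guarantee that the limit defining $H(\phi,X)$ equals the infimum over all $n$, which is what lets us identify the limit of the subsequence in (b).
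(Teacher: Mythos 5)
Your proposal is correct and follows essentially the same route as the paper: the identity $T_{2^{n+1}}(\phi,X)=T_{2^n}(\phi,X)\,\phi^{2^n}(T_{2^n}(\phi,X))$ combined with $\ell(AB)\leq\ell(A)+\ell(B)$ and $\ell(\phi^{k}(Y))\leq\ell(Y)$ gives (a), and (b) follows by identifying the limit of the decreasing subsequence with its infimum via \cite[Lemma 5.1.1]{DGBpak}. Your side remark that $1\in X$ is not needed for (a) is also accurate.
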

\begin{proof}
(a) Let $n\in\mathbb{N}$. Since $T_{2^{n+1}}(\phi,X))=T_{2^n}(\phi,X)\phi^{2^n}(T_{2^n}(\phi,X))$, we have that
\begin{equation*}
\ell(T_{2^{n+1}}(\phi,X))=\ell(T_{2^n}(\phi,X)\phi^{2^n}(T_{2^n}(\phi,X)))\leq \ell(T_{2^n}(\phi,X))+\ell(\phi^{2^n}T_{2^n}(\phi,X))
\leq 2\ell(T_{2^n}(\phi,X))\,.
\end{equation*}
Dividing both sides by $2^{n+1}$, we obtain
$$\frac{\ell(T_{2^{n+1}}(\phi,X))}{2^{n+1}}\leq\frac{\ell(T_{2^n}(\phi,X))}{2^n}\,.$$

(b) Since $({\ell(T_{2^n}(\phi,X))}/{2^n})_{n\in\N}$ is a subsequence of $({\ell(T_{n}(\phi,X))}/{n})_{n\in\N}$, by \eqref{limite} we have that
\begin{equation*}
H(\phi,X)=\lim_{n\to\infty} \frac{\ell(T_{2^n}(\phi,X))}{2^n}=\inf_{n\in\mathbb{N}} \frac{\ell(T_{2^n}(\phi,X))}{2^n}\,.
\end{equation*}
where the last equality holds by item (a).
\end{proof}

Given a group $G$, denote by $\mathcal L(G)$ the lattice of all subgroups of $G$. Let $$\ell: \mathcal{P}(G)\times \mathcal{L}(G)\rightarrow \mathbb{R}_{\geq0}\cup\{\infty\},\quad \ell(X,B)=\log[XB:B]=\log|\{xB\mid x\in B\}|\,;$$ 
in other words, letting $\pi:G\rightarrow \{xB\mid x\in G\}$ be 
the canonical projection, we have that 
\begin{equation}\label{pi}
\ell(X,B)=\ell(\pi(X))\,.
\end{equation}

We collect in the following lemma the useful properties of the function $\ell(-,-)$.


\begin{lemma}\label{6.3}
Let $G$ be a group, $X,X'\in \mathcal{P}(G)$ and $B, B'\in\mathcal{L}(G)$. Then:
\begin{enumerate}[(a)]
\item\label{6.3.1} the function $\ell(X,B)$ is increasing in $X$ and decreasing in $B$;
\item \label{6.3.2}$\ell(XB)=\ell(X,B)+\ell(B)$;
\item\label{sep} $\ell(XX',B)\leq \ell(X,B)+\ell(X',B)$;
\item \label{6.3.4}if $BB'$ is a subgroup, $\ell(XX',BB')\leq \ell(X,B)+\ell(X',B')$;
\item \label{enn} for $\phi\in\End(G)$, $\ell(\phi(X),\phi(B))\leq\ell(X,B)$.
\end{enumerate}
\end{lemma}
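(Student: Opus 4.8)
The plan is to prove each of the five items of Lemma~\ref{6.3} directly from the definition $\ell(X,B)=\log[XB:B]=\log|\pi_B(X)|$, where $\pi_B:G\to\{xB\mid x\in G\}$ is the projection onto the left cosets of $B$, using the reformulation \eqref{pi} together with elementary properties of the function $\ell(-)$ on $\mathcal P(G)$ (namely that it is monotone under inclusion and subadditive under products, i.e.\ $\ell(YY')\le\ell(Y)+\ell(Y')$, which follows from $|YY'|\le|Y|\,|Y'|$).

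For \ref{6.3.1}, monotonicity in $X$ is immediate since $X\subseteq X'$ implies $\pi_B(X)\subseteq\pi_B(X')$; for the decreasing dependence on $B$, if $B\subseteq B'$ then the map sending the coset $xB$ to $xB'$ induces a surjection $\pi_B(X)\twoheadrightarrow\pi_{B'}(X)$, whence $|\pi_{B'}(X)|\le|\pi_B(X)|$. For \ref{6.3.2} I would simply note that $XB$ is a union of $[XB:B]=|\pi_B(X)|$ cosets of $B$, each of cardinality $|B|$, so $|XB|=|\pi_B(X)|\cdot|B|$ and take logarithms; this uses $B$ is a subgroup so all its cosets have the same size. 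For \ref{sep}, use \eqref{pi}: $\ell(XX',B)=\ell(\pi_B(XX'))$, and since $\pi_B(XX')\subseteq\pi_B(X)\pi_B(X')$ in the quotient set (with the obvious partial multiplication — or, more safely, observe $\pi_B(XX')$ has size at most $|\pi_B(X)|\cdot|B\text{-translates}|$, better yet reduce to the case $B$ normal is false, so instead argue $XX'B\subseteq XX'B$ and count directly: each coset $xx'B$ with $x\in X$, $x'\in X'$ lies in $XX'B$, and the number of such cosets is at most the number of pairs $(xB, x'B)$, giving $[XX'B:B]\le[XB:B]\,[X'B:B]$). Taking logs gives subadditivity.

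For \ref{6.3.4}, assuming $BB'$ is a subgroup, I would count cosets of $BB'$ hit by $XX'$: a coset $xx'BB'$ is determined by the pair $(xBB', x'BB')$, which in turn is a quotient of $(xB, x'B')$, so $[XX'BB':BB']\le[XB:B]\cdot[X'B':B']$ after checking that the number of cosets $xBB'$ ($x\in X$) is at most $[XB:B]$ and similarly for $X'$; taking logs yields the claim. Finally, for \ref{enn}, I would note that $\phi$ maps the partition of $XB$ into cosets of $B$ onto a covering of $\phi(X)\phi(B)$ by translates of $\phi(B)$: precisely, $\pi_{\phi(B)}\circ\phi$ factors through $\pi_B$ on $X$, so $|\pi_{\phi(B)}(\phi(X))|\le|\pi_B(X)|$, whence $\ell(\phi(X),\phi(B))\le\ell(X,B)$. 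The main (though still mild) obstacle is bookkeeping in \ref{sep} and \ref{6.3.4}: since $B$ need not be normal, one cannot literally multiply cosets, so the cleanest route is always to bound the \emph{number} of cosets meeting a product set by the product of the numbers of cosets, which is a purely combinatorial inequality requiring a careful but routine argument. The infinite cases ($\ell=\infty$) are handled by the convention that the inequalities hold trivially when a right-hand side term is $\infty$, and the monotonicity arguments extend verbatim.
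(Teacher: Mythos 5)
Items (a), (b) and (e) of your proposal are correct and essentially identical to the paper's proof (and you rightly observe that in (b) the relation $|XB|=[XB:B]\,|B|$ gives an equality). The genuine problem is in (c), and it propagates to (d). Your key step is the claim that the number of distinct cosets $xx'B$ with $x\in X$, $x'\in X'$ is at most the number of pairs $(xB,x'B)$, i.e.\ $[XX'B:B]\le[XB:B]\,[X'B:B]$. You correctly sense the danger that one cannot multiply left cosets of a non-normal subgroup, but the fallback you propose is not a ``careful but routine'' count: the assignment $(xB,x'B)\mapsto xx'B$ is not well defined, since $x_1B=x_2B$ means $x_2=x_1b$ with $b\in B$, and then $x_2x'B=x_1bx'B$ coincides with $x_1x'B$ only when $x'^{-1}bx'\in B$. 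The coset $xx'B$ is determined by $x$ and by $x'B$, but not by $xB$ and $x'B$. In fact the inequality fails: in $G=S_3$ take $B=\langle(12)\rangle$, $X=B$ and $X'=\{(13)\}$; then $\ell(X,B)=\ell(X',B)=0$, while $XX'B=B(13)B$ is a double coset of $4$ elements, hence the union of the two distinct left cosets $(13)B$ and $(13)(12)B$, so $\ell(XX',B)=\log 2$. The same example with $B'=B$ defeats your argument for (d), whose reduction to counting pairs of cosets of $BB'$ suffers from the identical defect.

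To be fair, the paper's own proof of (c) consists of the single asserted inequality $\ell(\pi(XX'))\le\ell(\pi(X))+\ell(\pi(X'))$ and therefore makes exactly the same unjustified leap, so your proposal faithfully follows the paper's route, gap included. But the gap is real and cannot be closed as stated: some additional hypothesis is needed for (c) and (d), for instance that $B$ be normal, or that $BX'\subseteq X'B$ --- under the latter, writing $x=x_ib$ with $x_i$ a coset representative of $XB$ and rewriting $bx'=x''b'$ with $x''\in X'$ shows every coset $xx'B$ equals some $x_ix'_jB$, and the bound $[XX'B:B]\le[XB:B]\,[X'B:B]$ does follow. Without a hypothesis of this kind, no bookkeeping will rescue the argument, because the stated inequalities are false in general.
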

\begin{proof}
\eqref{6.3.1} If $X\subseteq X'$, then $XB\subseteq X'B$ and so $\ell(X,B)\leq\ell(X',B)$. If $B'\subseteq B$, then $|\{xB\mid x\in X\}|\leq |\{xB'\mid x\in X\}|$.

\eqref{6.3.2} Since $|XB|=[XB:B]|B|$, we have that
\begin{equation*}
\ell(XB)=\log[XB:B]|B|\leq\log[XB:B]+\log|B|=\ell(X,B)+\ell(B)\,.
\end{equation*} 

\eqref{sep} It suffices to compute that, for $\pi:G\to \{xB\mid x\in G\}$ the canonical projection,
$$\ell(XX',B)=\ell(\pi(XX'))\leq\ell(\pi(X))+\ell(\pi(X'))=\ell(X,B)+\ell(X',B)\,,$$
where the first and the last equality follow from \eqref{pi}.

\eqref{6.3.4} Follows from \eqref{sep} and \eqref{6.3.1}. 

\eqref{enn} The map $\{xB\mid x\in X\}\to \{\phi(x)\phi(B)\mid x\in X\}$ induced by $\phi$ is well-defined and surjective.
\end{proof}




The next proposition shows how the function $\ell(-,-)$ allows us to compute the algebraic entropy of $\bar\phi_{G/H}$ remaining in some sense inside $G$.

%


\begin{pro}\label{6.2cor}
Let $G$ be a group, $\phi\in\End(G)$, $H$ a $\phi$-invariant normal subgroup of $G$ and $\pi :G\rightarrow G/H$ the canonical projection. Let $n\in\mathbb{N}$ and $X\in \Pf(G)$ with $1\in X$. Then:
\begin{enumerate}[(a)]
\item the function
\begin{equation*}
n\mapsto \frac{\ell(T_{2^n}(\phi,X),H)}{2^n}
\end{equation*}
is decreasing;
\item $H(\bar\phi_{G/H},\pi(X))=\inf_{n\in\N}\frac{\ell(T_{2^n}(\phi,X),H)}{2^n}$
\end{enumerate}
\end{pro}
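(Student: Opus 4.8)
The plan is to reduce everything to Proposition~\ref{6.8a} applied to the quotient system $(G/H,\bar\phi_{G/H})$, using the identity \eqref{pi} that links $\ell(-,H)$ to the length function $\ell(-)$ downstairs. The crucial observation is that $\pi(T_n(\phi,X))=T_n(\bar\phi_{G/H},\pi(X))$ for every $n\in\N$: indeed $\pi$ is a morphism of algebraic dynamical systems, so it commutes with the $T_n$ construction, since $\pi(\phi^k(X))=\bar\phi_{G/H}^k(\pi(X))$ and $\pi$ carries products of subsets to products of subsets. Combining this with \eqref{pi} yields the key equality
$$
\ell(T_n(\phi,X),H)=\ell(\pi(T_n(\phi,X)))=\ell(T_n(\bar\phi_{G/H},\pi(X)))
$$
for every $n\in\N$. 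Note also that $1\in X$ forces $1\in\pi(X)$, so the hypotheses of Proposition~\ref{6.8a} are met for $\pi(X)\in\Pf(G/H)$.

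With this in hand, part (a) is immediate: the function $n\mapsto \ell(T_{2^n}(\phi,X),H)/2^n$ coincides with $n\mapsto \ell(T_{2^n}(\bar\phi_{G/H},\pi(X)))/2^n$, which is decreasing by Proposition~\ref{6.8a}(a) applied to the group $G/H$, the endomorphism $\bar\phi_{G/H}$, and the finite subset $\pi(X)$ containing the identity. For part (b), the same identification gives
$$
\inf_{n\in\N}\frac{\ell(T_{2^n}(\phi,X),H)}{2^n}=\inf_{n\in\N}\frac{\ell(T_{2^n}(\bar\phi_{G/H},\pi(X)))}{2^n}=H(\bar\phi_{G/H},\pi(X)),
$$
where the last equality is exactly Proposition~\ref{6.8a}(b) for the quotient system. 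This closes the argument.

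I do not expect a serious obstacle here; the only thing requiring a small amount of care is the verification that $\pi(T_n(\phi,X))=T_n(\bar\phi_{G/H},\pi(X))$, which is the step where $H$ being $\phi$-invariant and normal is used (so that $\bar\phi_{G/H}$ is well-defined and $\pi\phi=\bar\phi_{G/H}\pi$), together with the elementary fact that a surjective homomorphism sends $X_0 X_1\cdots X_{n-1}$ to $\pi(X_0)\pi(X_1)\cdots\pi(X_{n-1})$. Once that identity is recorded, the proposition is a direct transfer of Proposition~\ref{6.8a} through $\pi$ via \eqref{pi}.
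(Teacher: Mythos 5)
Your argument is correct and is essentially identical to the paper's own proof: both establish $\ell(T_n(\phi,X),H)=\ell(T_n(\bar\phi_{G/H},\pi(X)))$ via the identity $\pi(T_n(\phi,X))=T_n(\bar\phi_{G/H},\pi(X))$ together with \eqref{pi}, and then transfer both statements from Proposition~\ref{6.8a} applied to $\bar\phi_{G/H}$ and $\pi(X)$. The extra care you take in justifying the commutation of $\pi$ with the $T_n$ construction is a welcome, if routine, addition.
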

\begin{proof}
Since, for every $n\in\N$, $T_n(\bar \phi_{G/H},\pi(X))=\pi(T_n(\phi,X))$, by \eqref{pi} we have that 
$$\ell(T_n(\bar \phi_{G/H}, \pi(X)))=\ell(T_n(\phi,X),H)\,.$$
Now the statements in (a) and (b) follow from the latter equation and respectively from items (a) and (b) of  Proposition~\ref{6.8a} applied to $\bar\phi_{G/H}$ and $\pi(X)$.
\end{proof}



The following technical lemma is needed in the last and main proof of the paper.
We choose $F\in\F_C(G)$ in order to have that each $T_n(\phi,F)$ is a subgroup of $G$ in view of Lemma~\ref{comm}.

\begin{lemma}\label{6.8}
Let $G$ be a group, $\phi\in\End(G)$, $X\in\Pf(G)$ with $1\in X$, and $F\in \mathcal{F}_C(G)$. Then the function
\begin{equation*}
n\mapsto \frac{\ell(T_{2^n}(\phi,X),T_{2^n}(\phi, F))}{2^n}
\end{equation*}
is decreasing.
\end{lemma}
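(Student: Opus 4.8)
The plan is to establish the following submultiplicative-type estimate: for every $n\in\N$, putting $m=2^n$,
$$\ell\bigl(T_{2m}(\phi,X),T_{2m}(\phi,F)\bigr)\le 2\,\ell\bigl(T_m(\phi,X),T_m(\phi,F)\bigr);$$
dividing both sides by $2m=2^{n+1}$ then yields exactly the asserted monotonicity.

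First I would invoke the telescoping identity $T_{2m}(\phi,Y)=T_m(\phi,Y)\,\phi^m\bigl(T_m(\phi,Y)\bigr)$, valid for every $Y\in\Pf(G)$ by regrouping the product $Y\phi(Y)\cdots\phi^{2m-1}(Y)$. Setting $A=T_m(\phi,X)$ and $B=T_m(\phi,F)$, it reads $T_{2m}(\phi,X)=A\,\phi^m(A)$ and $T_{2m}(\phi,F)=B\,\phi^m(B)$. Since $F\in\mathcal{F}_C(G)$, Lemma~\ref{comm} guarantees that $B=T_m(\phi,F)$ and $B\,\phi^m(B)=T_{2m}(\phi,F)$ are subgroups of $G$, while $\phi^m(B)$ is a subgroup as a homomorphic image of $B$. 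This is the only point at which the hypothesis $F\in\mathcal{F}_C(G)$ is used, and it is exactly what makes the auxiliary function $\ell(-,-)$ applicable here with its second argument ranging in $\mathcal L(G)$.

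Next I would apply Lemma~\ref{6.3}\eqref{6.3.4} to $A$ and $\phi^m(A)$ in place of $X$ and $X'$, and to $B$ and $\phi^m(B)$ in place of $B$ and $B'$ --- which is legitimate because $B\,\phi^m(B)$ is a subgroup --- obtaining
$$\ell\bigl(A\,\phi^m(A),\,B\,\phi^m(B)\bigr)\le\ell(A,B)+\ell\bigl(\phi^m(A),\phi^m(B)\bigr).$$
Iterating Lemma~\ref{6.3}\eqref{enn} $m$ times gives $\ell\bigl(\phi^m(A),\phi^m(B)\bigr)\le\ell(A,B)$. Substituting this into the previous display produces $\ell\bigl(T_{2m}(\phi,X),T_{2m}(\phi,F)\bigr)\le 2\,\ell(A,B)=2\,\ell\bigl(T_m(\phi,X),T_m(\phi,F)\bigr)$, which is the claimed estimate.

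The argument is quite short, and the only delicate point --- the ``main obstacle'', such as it is --- is checking the subgroup hypotheses required to invoke Lemma~\ref{6.3}\eqref{6.3.4}, in particular that $B\,\phi^m(B)$ is a subgroup; this is precisely where $F\in\mathcal{F}_C(G)$ together with Lemma~\ref{comm} is needed, via the identification $B\,\phi^m(B)=T_{2m}(\phi,F)$. The remainder is a routine combination of the formal properties of $\ell(-,-)$ collected in Lemma~\ref{6.3} with the telescoping identity for the sets $T_k(\phi,-)$.
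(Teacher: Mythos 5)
Your proof is correct and follows essentially the same route as the paper's: the telescoping identity $T_{2^{n+1}}(\phi,Y)=T_{2^n}(\phi,Y)\,\phi^{2^n}(T_{2^n}(\phi,Y))$ combined with Lemma~\ref{6.3}\eqref{6.3.4} and \eqref{enn} to get the factor-of-two estimate. Your explicit verification of the subgroup hypotheses via Lemma~\ref{comm} is a point the paper leaves implicit, but the argument is the same.
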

\begin{proof}
Let $n\in\mathbb{N}$. By Lemma~\ref{6.3}(d,e), we have that
\begin{equation*}
\begin{split}
\ell(T_{2^{n+1}}(\phi,X),T_{2^{n+1}}(\phi,F))&= \ell(T_{2^n}(\phi,X)\phi^{2^n}(T_{2^n}(\phi,X)),T_{2^n}(\phi,F)\phi^{2^n}(T_{2^n}(\phi,F)))\\
&\leq \ell(T_{2^n}(\phi,X),T_{2^n}(\phi,F))+\ell(\phi^{2^n}T_{2^n}(\phi,X),\phi^{2^n}T_{2^n}(\phi,F))\\
&\leq 2\ell(T_{2^n}(\phi,X),T_{2^n}(\phi,F))\,.
\end{split}
\end{equation*}
Dividing both sides by $2^{n+1}$, we obtain
$$\frac{\ell(T_{2^{n+1}}(\phi,X),T_{2^{n+1}}(\phi,F))}{2^{n+1}}\leq\frac{\ell(T_{2^n}(\phi,X),T_{2^n}(\phi,F))}{2^n}\,,$$
that concludes the proof.
\end{proof}

Now we are in position to complete the proof of Theorem~\ref{atq} by showing that also the converse inequality with respect to that in Proposition~\ref{first} holds true.


\begin{pro}\label{second}
Let $G$ be a finitely quasihamiltonian locally finite group, $\phi\in\End(G)$, and $H$ a $\phi$-invariant normal subgroup of $G$. Then
$$h(\phi)\leq h(\phi\rest_H)+h(\bar\phi_{G/H})\,.$$
\end{pro}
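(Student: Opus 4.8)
The plan is to fix $X\in\Pf(G)$ with $1\in X$ and show $H(\phi,X)\le h(\phi\rest_H)+h(\bar\phi_{G/H})$; since $G$ is finitely quasihamiltonian and locally finite, it suffices by Remark~\ref{rem1} to take $X=F\in\F_C(G)$, so that every $T_n(\phi,F)$ is a subgroup (Lemma~\ref{comm}). The core idea, inspired by \cite{DFG}, is to split $\ell(T_n(\phi,F))$ using the auxiliary function $\ell(-,-)$ along the subgroup $T_n(\phi,F)\cap H$. Concretely, writing $A_n=T_n(\phi,F)\cap H$, Lemma~\ref{6.3}(b) gives
\begin{equation*}
\ell(T_n(\phi,F))=\ell\bigl(T_n(\phi,F),A_n\bigr)+\ell(A_n).
\end{equation*}
The first summand measures the $G/H$-part: since $\pi(T_n(\phi,F))=T_n(\bar\phi_{G/H},\pi(F))$ and $A_n=T_n(\phi,F)\cap H=\ker(\pi\rest_{T_n(\phi,F)})$, we have $\ell(T_n(\phi,F),A_n)=\ell(T_n(\phi,F),H)=\ell(T_n(\bar\phi_{G/H},\pi(F)))$ by \eqref{pi}. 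Dividing by $n$ and letting $n\to\infty$ through powers of $2$ (Proposition~\ref{6.2cor}) this term contributes exactly $H(\bar\phi_{G/H},\pi(F))\le h(\bar\phi_{G/H})$.

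The delicate part is the second summand $\ell(A_n)=\ell(T_n(\phi,F)\cap H)$, which must be bounded by something converging to a quantity $\le h(\phi\rest_H)$. The subgroup $A_n$ need not be of the form $T_n(\phi\rest_H,E)$ for a fixed $E\in\F(H)$, so one cannot directly identify its growth with an entropy over $H$. Here is where I expect the main obstacle, and where Lemma~\ref{6.8} enters: the strategy is to compare $T_n(\phi,F)\cap H$ with $T_n(\phi,F)\cap T_n(\phi,E)$ for a suitable finite $E\subseteq H$ (chosen cofinally in $\bar\F_C(H)$, e.g. $E=F'\cap H$ for $F'\in\F_C(G)$ large), or more precisely to run the decomposition relative to the subgroup $T_{2^n}(\phi,F_0)$ for a fixed large $F_0\in\F_C(G)$ with $F\subseteq F_0$. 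Using Lemma~\ref{6.3}(b) twice and Proposition~\ref{6.8a}, one gets for every $n$
\begin{equation*}
\frac{\ell(T_{2^n}(\phi,F))}{2^n}\le\frac{\ell(T_{2^n}(\phi,F),H)}{2^n}+\frac{\ell\bigl(T_{2^n}(\phi,F)\cap H\bigr)}{2^n},
\end{equation*}
and the point is to bound the last term, uniformly in $n$, by $\ell(T_{2^n}(\phi\rest_H,E))/2^n$ up to a contribution that vanishes as $n\to\infty$; the decreasing monotonicity in Lemma~\ref{6.8} is exactly the tool that makes a term of the shape $\ell(T_{2^n}(\phi,X),T_{2^n}(\phi,F))/2^n$ converge to its infimum, allowing one to absorb the ``difference'' between $A_n$ and a genuine $T_n$-set over $H$ into a quantity bounded by $h(\phi\rest_H)$.

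Putting the pieces together: one takes a cofinal family of pairs $F\subseteq F_0$ in $\F_C(G)$, decomposes $\ell(T_{2^n}(\phi,F))$ as above, passes to the limit along powers of $2$ (legitimate by Proposition~\ref{6.8a}), identifies the $G/H$-contribution via Proposition~\ref{6.2cor} with $H(\bar\phi_{G/H},\pi(F))$, bounds the $H$-contribution by $H(\phi\rest_H,E)$ for $E=F_0\cap H\in\bar\F_C(H)$ (using Lemma~\ref{6.8} to control the discrepancy), and finally takes suprema over $F$, invoking Remark~\ref{rem1} together with the cofinality statements of Proposition~\ref{norloc}(a),(b) for $H$ and $G/H$. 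The main technical hurdle, as indicated, is the rigorous comparison of the ``kernel along $H$'' subgroup $T_n(\phi,F)\cap H$ with a $\phi\rest_H$-trajectory set inside $H$; everything else is bookkeeping with the properties of $\ell(-,-)$ collected in Lemma~\ref{6.3}.
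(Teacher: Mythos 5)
There is a genuine gap, and it sits exactly where you flag ``the main obstacle'': your decomposition
$\ell(T_{2^n}(\phi,F))=\ell(T_{2^n}(\phi,F),H)+\ell(T_{2^n}(\phi,F)\cap H)$
correctly identifies the first summand with $\ell(T_{2^n}(\bar\phi_{G/H},\pi(F)))$, but the second summand $\ell(A_n)$ with $A_n=T_{2^n}(\phi,F)\cap H$ is never actually bounded. You only assert that Lemma~\ref{6.8} ``should'' allow one to absorb the difference between $A_n$ and a genuine trajectory $T_{2^n}(\phi\rest_H,E)$; no argument is given, and none of the cited lemmas applies to $A_n$ directly, since $A_n$ is not of the form $T_{2^n}(\phi\rest_H,E)$ for any fixed finite $E$ and there is no containment in either direction between $A_n$ and such a trajectory. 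This is not bookkeeping: controlling the growth of the intersection $T_n(\phi,F)\cap H$ is precisely the point where the Addition Theorem fails in general (in the Lamplighter counterexample it is exactly $T_n(\id,F)\cap H$ that grows exponentially while $h(\id_H)=0$), so any proof must use the standing hypotheses here in an essential way, and your sketch does not.

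The paper's proof assigns the two roles the other way around, which is what makes the estimate close. Fixing $\varepsilon>0$, it chooses $M$ with $\ell(T_{2^M}(\phi,D),H)/2^M\leq H(\bar\phi_{G/H},\pi(D))+\varepsilon$, sets $T=T_{2^M}(\phi,D)$, $H_0=H\cap T$, and picks $\bar H_0\in\F_C(H)$ containing $H_0$ (this is where finite quasihamiltonianity of $H$, via Proposition~\ref{norloc}, enters). The subgroup contribution is then the honest trajectory $\ell(T_{2^n}(\phi,\bar H_0))/2^n\to H(\phi\rest_H,\bar H_0)\leq h(\phi\rest_H)$, while the \emph{relative} term $\ell(T_{2^n}(\phi,D),T_{2^n}(\phi,\bar H_0))/2^n$ is charged to the \emph{quotient}: by Lemma~\ref{6.8} it is decreasing in $n$, hence bounded by its value at $n=M$, and the sandwich $\ell(T,H)\leq\ell(T,S)\leq\ell(T,\bar H_0)\leq\ell(T,H_0)=\ell(T,H)$ (with $S=T_{2^M}(\phi,\bar H_0)\subseteq H$) shows that value equals $\ell(T,H)/2^M\leq H(\bar\phi_{G/H},\pi(D))+\varepsilon$. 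The key idea missing from your plan is this choice of a finite subgroup $\bar H_0$ of $H$ \emph{depending on $\varepsilon$ through the scale $M$}, together with the identity $\ell(T,S)=\ell(T,H)$ that converts the relative term at scale $M$ into a quotient quantity. Without some substitute for that step, your outline does not yield the inequality.
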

\begin{proof}
Consider $D\in \mathcal{F}_C(G)$ and let $C=\pi(D)\in \bar\F_C(G/H)$. Fix $\varepsilon>0$. By Proposition~\ref{6.2cor}, there exists $M\in\mathbb{N}$ such that, for every $n\geq M$,
\begin{equation}\label{equ2}
\frac{\ell(T_{2^n}(\phi,D),H)}{2^n}\leq H(\bar\phi_{G/H},C)+\varepsilon\,.
\end{equation}
Let $$T=T_{2^M}(\phi,D)\quad \text{and}\quad H_0=H\cap T\,.$$ 
Since $H$ is finitely quasihamiltonian by Proposition~\ref{norloc}, there exists $\bar H_0\in \mathcal{F}_C(H)$ that contains $H_0$; let $$S=T_{2^M}(\phi,\bar H_0)\,.$$
By definition, $H_0=H\cap T\subseteq T$, and so $$\ell(T,H_0)=\log[T:H_0]=\log[T:H\cap T]=\log[TH:H]=\ell(T,H)\,.$$ 
Moreover, $\ell(T,H)\leq\ell(T,S)\leq\ell(T,\bar H_0)\leq\ell(T,H_0)$ by Lemma~\ref{6.3}\eqref{6.3.1}. Hence, we have that
\begin{equation}\label{equ3}
\ell(T,S)=\ell(T,H)\,.
\end{equation}
Let $n\geq M$. By Lemma~\ref{6.8} and equations \eqref{equ2} and \eqref{equ3} we have that
\begin{equation}\label{equ4}
\begin{split}
\frac{\ell(T_{2^n}(\phi,D),T_{2^n}(\phi,\bar H_0))}{2^n}&\leq\frac{\ell(T_{2^M}(\phi,D),T_{2^M}(\phi,\bar H_0))}{2^M}=\frac{\ell(T,S)}{2^M}=\\
&=\frac{\ell(T,H)}{2^M}\leq H(\bar\phi_{G/H}, C)+\varepsilon\leq h(\bar\phi_{G/H})+\varepsilon\,.
\end{split}
\end{equation}
By Proposition~\ref{6.8a}(a,b) there exists $M'\geq M$, such that for every $n\geq M'$,
\begin{equation}\label{equ5}
\frac{\ell(T_{2^n}(\phi,\bar H_0))}{2^n}\leq H(\phi\rest_H,\bar H_0)+\varepsilon\,\leq h(\phi\rest_H)+\varepsilon\,.
\end{equation}
Let $n\geq M'$. By Proposition~\ref{6.8a}(b), 
\begin{equation}\label{equ7}
H(\phi,D)\leq\frac{\ell(T_{2^n}(\phi,D))}{2^n}\,.
\end{equation}
Moreover, by Lemma~\ref{6.3}(\ref{6.3.1},\ref{6.3.2}), we have that
\begin{equation}\label{equ6}
\ell(T_{2^n}(\phi,D))\leq \ell(T_{2^n}(\phi,D)T_{2^n}(\phi,\bar H_0))=\ell(T_{2^n}(\phi,D),T_{2^n}(\phi,\bar H_0))+\ell(T_{2^n}(\phi,\bar H_0)).
\end{equation}
Hence, by \eqref{equ7}, \eqref{equ6}, \eqref{equ4}, and \eqref{equ5}, we obtain that
\begin{equation*}
H(\phi,D)\leq\frac{\ell(T_{2^n}(\phi,D))}{2^n}\leq\frac{\ell(T_{2^n}(\phi,D),T_{2^n}(\phi,\bar H_0))}{2^n}+\frac{\ell(T_{2^n}(\phi,\bar H_0))}{2^n}\leq h(\bar\phi_{G/H})+h(\phi\rest_H)+2\varepsilon\,.
\end{equation*}
This holds for every $\varepsilon>0$ and every $D\in\mathcal{F}_C(G)$, therefore we have the thesis.
\end{proof}

%
%

\end{document}